\newcommand\ZZ{\mathbb{Z}}
\newcommand\NN{\mathbb{N}}
\newcommand\QQ{\mathbb{Q}}
\newcommand\PP{\mathbb{P}} 
\newcommand{\OO}{\mathcal{O}} 
\DeclareMathOperator{\height}{ht} 
\DeclareMathOperator{\supp}{Supp} 
\DeclareMathOperator{\spec}{Spec} 
\DeclareMathOperator{\pic}{Pic} 
\DeclareMathOperator{\gal}{Gal} 
\DeclareMathOperator{\rk}{rk} 
\newtheorem{theorem}{Theorem}
\newtheorem{lemma}[theorem]{Lemma}
\newtheorem{prop}[theorem]{Proposition}
\newtheorem{cor}[theorem]{Corollary}
\theoremstyle{definition}
\newtheorem{defin}[theorem]{Definition}
\newtheorem{rem}[theorem]{Remark}
\newtheorem{example}[theorem]{Example}
\newtheorem{question}[theorem]{Question}
\numberwithin{theorem}{section}
\numberwithin{equation}{section}
\keywords{Complete intersections, Cox rings, Galois descent}
\subjclass[2010]{14M10 (14G27, 14C20)}
\begin{document}

\setcounter{tocdepth}{1}

\title{On Galois descent of complete intersections}

\address{EPFL SB MATH CAG, B\^at. MA, Station 8, 1015 Lausanne, Switzerland}
  
\author{Marta Pieropan} \email{marta.pieropan@epfl.ch}

\date{March 13, 2020}

\begin{abstract}
We introduce a notion of strict complete intersections with respect to  Cox rings and we prove Galois descent for this new notion.
\end{abstract}

\maketitle

\tableofcontents

\section{Introduction}

Throughout the paper, complete intersection means  scheme-theoretic complete intersection. Given a field $k$ we always denote by $\overline k$ a separable closure. 

Given an inclusion $X\subseteq Y$ of varieties over a field $k$ such that $X_{\overline k}$ is a  complete intersection of hypersurfaces of $Y_{\overline k}$, 
one can ask whether $X$ is a  complete intersection of hypersurfaces of $Y$.
If $Y=\mathbb{P}^n_k$ then the answer is positive. A proof by induction as in \cite[Lemma 3.3]{MR3605019} works over all fields and regardless of the smoothness of the complete intersection. In this paper we generalize the result to polarized log Fano ambient varieties as follows.
\begin{theorem}\label{thm:logFano_ample}
Let $k$ be a field of characteristic 0. 
Let $Y$ be a  log Fano $k$-variety and $X\subseteq Y$ a subvariety such that $X_{\overline k}$ is a complete intersection of $s$ hypersurfaces of $Y_{\overline k}$ of degrees $D_1,\dots,D_s\in\ZZ A$ for a very ample $A\in\pic(Y)$. Then $X$ is a complete intersection of $s$ hypersurfaces of $Y$ of degrees $D_1,\dots,D_s$, respectively.
\end{theorem}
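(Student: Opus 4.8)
The plan is to reduce the statement to the existence of $k$-rational defining sections, and then to produce these by Galois descent combined with a density argument over the infinite field $k$. Write $D_i = d_i A$ and set $U_i := H^0(Y, \mathcal{I}_X(D_i))$, where $\mathcal{I}_X \subseteq \OO_Y$ is the ideal sheaf of $X$. First I would record that this construction is compatible with base change: since $\overline k / k$ is flat, the sequence $0 \to \mathcal{I}_X \to \OO_Y \to \OO_X \to 0$ stays exact after $\otimes_k \overline k$, so $(\mathcal{I}_X)_{\overline k} = \mathcal{I}_{X_{\overline k}}$, and flat base change for cohomology of the proper $k$-scheme $Y$ gives $U_i \otimes_k \overline k = H^0(Y_{\overline k}, \mathcal{I}_{X_{\overline k}}(D_i))$. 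The hypothesis that $X_{\overline k}$ is a complete intersection of $s$ hypersurfaces of degrees $D_i$ means precisely that there exist $\overline f_i \in U_i \otimes_k \overline k$ whose associated map $\bigoplus_i \OO_{Y_{\overline k}}(-D_i) \to \OO_{Y_{\overline k}}$ has image exactly $\mathcal{I}_{X_{\overline k}}$.

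Next I would introduce the parameter space $\mathbb{A} := \prod_{i=1}^s \mathbb{A}(U_i)$, the affine space over $k$ of tuples $(g_1, \dots, g_s)$ with $g_i \in U_i$, and the universal map $\Phi \colon \bigoplus_i \mathrm{pr}_Y^* \OO_Y(-D_i) \to \mathrm{pr}_Y^* \mathcal{I}_X$ on $Y \times_k \mathbb{A}$. Let $G \subseteq \mathbb{A}$ be the locus of tuples for which $\Phi$ is surjective on the corresponding fibre over $Y$, equivalently for which $(g_1, \dots, g_s)$ generate $\mathcal{I}_X$, i.e. $V(g_1, \dots, g_s) = X$ scheme-theoretically. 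I claim $G$ is open: the cokernel $\mathcal{Q}$ of $\Phi$ is coherent, so $\supp \mathcal{Q}$ is closed in $Y \times_k \mathbb{A}$, and since $Y$ is proper its image under $\mathrm{pr}_{\mathbb{A}}$ is closed; as formation of the cokernel commutes with restriction to a fibre, the complement of this image is exactly $G$. Here I would also note that $Y$ is Cohen--Macaulay --- being log Fano in characteristic $0$ it has klt, hence rational, hence Cohen--Macaulay singularities --- and that $X$ has pure codimension $s$ (a geometric property visible from the complete intersection $X_{\overline k}$). Consequently any tuple in $G$ cuts out the codimension-$s$ subscheme $X$ by $s$ equations in a Cohen--Macaulay variety, so the $g_i$ automatically form a regular sequence and $X = V(g_1, \dots, g_s)$ is an honest complete intersection.

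Finally I would descend. By construction $(\overline f_1, \dots, \overline f_s) \in G(\overline k)$, so $G$ is a nonempty open subscheme of the affine space $\mathbb{A}$ over $k$; since $k$ has characteristic $0$ it is infinite, its rational points are Zariski dense in $\mathbb{A}$, and a nonempty open subset of an irreducible variety over an infinite field has a rational point. Hence $G(k) \neq \emptyset$, and any $k$-point yields sections $f_i \in U_i = H^0(Y, \mathcal{I}_X(D_i))$ defined over $k$ with $\mathcal{I}_X = (f_1, \dots, f_s)$; these exhibit $X$ as a complete intersection over $k$ of $s$ hypersurfaces of degrees $D_1, \dots, D_s$, as required.

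I expect the main obstacle to be the openness of $G$, and more precisely the passage from ``the sections generate $\mathcal{I}_X$'' to ``$X$ is a genuine complete intersection''. The first is controlled by coherence together with the properness of $Y$ (so that surjectivity of $\Phi$ is an open condition on $\mathbb{A}$), while the second relies on the Cohen--Macaulayness of $Y$ in characteristic $0$ to upgrade ``$s$ generators of an ideal of codimension $s$'' to a regular sequence; these two inputs, rather than the descent step itself, are where the hypotheses (log Fano, characteristic $0$) are genuinely used.
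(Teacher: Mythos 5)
Your proposal is correct, but it takes a genuinely different route from the paper's. The paper proves Theorem \ref{thm:logFano_ample} by passing through a Cox ring $R$ of $Y$ of type $\ZZ A$: it invokes the cited results on singularities of Cox rings of log Fano varieties (this is precisely where characteristic $0$ and the log Fano hypothesis enter) to see that $R$ is a Cohen--Macaulay finitely generated $k$-algebra, then uses the height computations of Lemmas \ref{lem:saturation} and \ref{lem:height_ci} to show that the ideal $(f_1,\dots,f_s)$ is saturated with respect to the irrelevant ideal, so that $X_{\overline k}$ is a \emph{strict} complete intersection with respect to $R_{\overline k}$ by Corollary \ref{cor:strict_ci_saturation}, and finally applies the Galois-descent Theorem \ref{thm:form_ci}, which produces invariant generators by averaging inside the Galois-invariant ideal. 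You bypass all of this: you parametrize tuples of sections of $\mathscr{I}_X(D_i)$ over $k$, show the locus $G$ where a tuple generates $\mathscr{I}_X$ is open (coherence of the cokernel plus properness of $Y$), note $G\neq\emptyset$ because the given $\overline f_i$ furnish an $\overline k$-point, and conclude by density of $k$-points of affine space over an infinite field. This is more elementary and in fact proves more: it uses only that $Y$ is projective and geometrically integral, that the classes $D_i$ lie in the image of $\pic(Y)\to\pic(Y_{\overline k})$, and that $k$ is infinite; the log Fano hypothesis, the very ampleness of $A$, and Cohen--Macaulayness play no role. Indeed, your closing paragraph misattributes where the hypotheses are needed: with the paper's scheme-theoretic definition of complete intersection ($\mathscr{I}_X=\mathscr{I}_{H_1}+\dots+\mathscr{I}_{H_s}$ plus the codimension condition), the regular-sequence/Cohen--Macaulay discussion is superfluous, and a $k$-point of $G$ finishes the proof directly. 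What the paper's heavier machinery buys is generality in a different direction: Theorem \ref{thm:form_ci} also handles hypersurface classes that are only Galois-stable as a set (orbit complete intersections), and its descent step involves no density argument and no infinite-field restriction, whereas your argument is confined to classes defined over $k$ and to infinite $k$.

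One small point to patch: a $k$-point of $G$ gives sections $f_i$ generating $\mathscr{I}_X$, but to match the definition you must also know that each $f_i\neq 0$, so that $V(f_i)$ is an honest hypersurface of degree $D_i$. This is automatic: if some $f_i$ were $0$, then $\mathscr{I}_X$ would be locally generated by $s-1$ elements, contradicting Krull's height theorem at a generic point of the codimension-$s$ subvariety $X$; alternatively, intersect $G$ with the open locus where all coordinates are nonzero, which still contains the point $(\overline f_1,\dots,\overline f_s)$.
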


We investigate also complete intersections of hypersurfaces whose degrees are not all multiples of the same divisor class. In this case,
if the divisor class of one of the hypersurfaces that define $X_{\overline k}$ is not defined over $k$,  we should not expect a positive answer to the  question above, as the following example illustrates. 

\begin{example}\label{eg:orbit}
Let $k'/k$ be a quadratic separable extension of fields. Let $\sigma\in\gal(k'/k)$ be the nontrivial element. Let $Y$ be a $k$-variety such that $Y_{k'}\cong\PP^1_{k'}\times\PP^1_{k'}$ with the  $\gal(k'/k)$-action that sends a $k'$-point $((x_1:y_1),(x_2:y_2))$ to the point $((\sigma(x_2):\sigma(y_2)),(\sigma(x_1):\sigma(y_1)))$. Let $H_i:=\{x_i=0\}\subseteq Y_{k'}$ for $i\in\{1,2\}$. Then $X:=H_1\cap H_2$ is a complete intersection defined over $k$, 
as the hypersurfaces $H_1$ and $H_2$ form an orbit under the the  $\gal(k'/k)$-action on $Y_{k'}$.
We observe that $H_1$, $H_2$ have classes $(1,0)$, $(0,1)$ in $\pic(Y_{k'})\cong\ZZ^2$, respectively, and that $\pic(Y)$ is the subgroup generated by the class $(1,1)$. Hence, by intersection theory, $X$
 cannot be written as a complete intersection of hypersurfaces of $Y$. 
\end{example}
To study complete intersections as in the example above we introduce the notion of orbit complete intersection: We say that a subvariety $X$ of an integral $k$-variety  $Y$ is a \emph{single-orbit complete intersection} if $X_{\overline k}$ is a complete intersection of Cartier divisors $H_1,\dots,H_s$ on $Y_{\overline k}$ such that $H_1,\dots,H_s$ form an orbit under the action of $\gal(\overline k/k)$ on $Y_{\overline k}$ and the subset $\{[H_1],\dots,[H_s]\}$ of $\pic(Y_{\overline k})$ has cardinality $s$. We say that $X$ is an \emph{orbit complete intersection} if it is a scheme-theoretic complete intersection of single-orbit complete intersections. 
We observe that  all single-orbit complete intersections in $\PP^n_k$ are hypersurfaces, the intersection $H_1\cap H_2$  in Example \ref{eg:orbit} is a single-orbit complete intersection, and all complete intersections of hypersurfaces of $Y$ are trivially orbit complete intersections. 
In Section \ref{section:strict_ci} we address the following refinement of the original question.

\begin{question}\label{question_refined}
Given an inclusion $X\subseteq Y$ of varieties over a field $k$ such that $X_{\overline k}$ is a  complete intersection of hypersurfaces of $Y_{\overline k}$, is $X$ an orbit complete intersection?
\end{question}

The notion of orbit complete intersection has natural arithmetic applications. Indeed, many proofs involving number theory are carried out assuming that the varieties under study are defined by equations over the base field $k$. Similarly, knowing the action of the absolute Galois group on the equations that define the variety gives control on the arithmetic properties. The notion was inspired by the results in \cite[\S 5.2]{ratconn}. We expect many further arithmetic applications.

We recall that a scheme-theoretic complete intersection $X$ of codimension $s$ in a projective space $\PP^n_{\overline k}$  has the property that the homogeneous ideal of $X$ in the coordinate ring of $\PP^n_{\overline k}$ is generated by $s$ elements. This last property is called strict complete intersection in  \cite[Exercise II.8.4]{MR0282977}.
To study Galois descent of complete intersections we introduce a notion of ideal of a subvariety $X\subseteq Y$ in a Cox ring $R$ of $Y$ and we say that $X$ is a strict complete intersections with respect to the given Cox ring if the ideal of $X$ in $R$ is generated by $s$ elements where $s$ is the codimension of $X$ in $Y$. See Section \ref{section:strict_ci} for the precise definitions. The reason for this definition is that the ideal of $X_{\overline k}$ in $R_{\overline k}$ is $\gal(\overline k/k)$-invariant, and hence can be used to perform Galois descent.
See Theorem \ref{thm:form_ci}, which gives a positive answer to Question \ref{question_refined} in the case of strict complete intersections with respect to Cox rings.

Strict complete intersections with respect to a Cox ring $R$ are, in particular, complete intersections of hypersurfaces defined by elements $f_1,\dots,f_s$ of $R$.
In Section \ref{section:saturation} we show that the strict complete intersection property is equivalent to the saturation of the ideal generated by $f_1,\dots,f_s$ with respect to the irrelevant ideal of the Cox ring (see Corollary \ref{cor:strict_ci_saturation}). 
If $Y$ is a projective space, the saturation is automatic. 
In other ambient varieties not all complete intersections are strict complete intersections with respect to a Cox ring. In Section \ref{section:applications} we give some examples. 

\subsection*{Notation}
Unless  stated otherwise,  $k$ denotes an arbitrary field.
We denote by $\OO_Y$ the structure sheaf of a variety $Y$. The ideal sheaf of a closed subvariety $X\subseteq Y$ is denoted by $\mathscr{I}_{X}$. 
Given an effective Cartier divisor $D$ on an integral variety $Y$, we denote by $\supp(D)$ the support of $D$, and we identify
$H^0(Y,\OO_Y(D))$ with the set of elements $a$ in the function field $K(Y)$ of $Y$ such that $D+(a)$ is effective, where $(a)$ is the principal ideal defined by $a$. 
Given an element $g$ of a ring $R$, we denote by $R[g^{-1}]$ the localization of $R$ at $g$.
Given two ideals $I$ and $G$ in a ring $R$, we denote by $(I:G^\infty)$ the saturation of $I$ with respect to $G$ in $R$.

\section{Galois descent of strict complete intersections}\label{section:strict_ci}
\begin{defin}
Let $Y$ be an integral $k$-variety. 
We say that a closed subvariety $X\subseteq Y$ is a \emph{complete intersection} of hypersurfaces $H_1,\dots,H_s$ of $Y$ if $\dim X+s=\dim Y$ and $
\mathscr{I}_X=\mathscr{I}_{H_1}+\dots+\mathscr{I}_{H_s}$.
\end{defin}

We introduce a correspondence between  ideal sheaves (of subvarieties) and homogeneous ideals in Cox rings.
We refer to \cite{arXiv:1408.5358} for the theory of Cox rings.
\begin{defin}\label{def:indeals_cox_ring}
Let $Y$ be an integral $k$-variety such that $H^0(Y,\OO_{Y_{\overline k}})^\times=\overline k^\times$.
Let $R$ be a Cox ring of $Y$ of type $M\subseteq \pic(Y)$ for a finitely generated subgroup $M$ of $\pic(Y)$. 
For every homogeneous  element $f\in R$, denote by $D_f$ the corresponding effective divisor on $Y$.
For every homogeneous ideal $I$ of $R$ let 
\[
\varphi(I):=\sum_{\substack{f\in I\\\text{homogeneous}}}\OO_Y(-D_f)\subseteq\OO_Y.
\]
For every ideal sheaf $\mathscr{I}\subseteq\OO_Y$, let $\psi(\mathscr{I})$ be the ideal of $R$ generated by all homogeneous elements $f\in R$ such that $\OO_Y(-D_f)\subseteq\mathscr{I}$. In particular, if $X\subseteq Y$ is a subvariety with ideal sheaf $\mathscr{I}_X\subseteq\OO_Y$, we say that $\psi(\mathscr{I}_X)$ is the ideal of $X$ in the Cox ring $R$.
\end{defin}

\begin{defin}
Let $Y$ be an integral $k$-variety such that $H^0(Y,\OO_{Y_{\overline k}})^\times=\overline k^\times$.
Let $R$ be a Cox ring of $Y$ of type $M\subseteq \pic(Y)$ for a finitely generated subgroup $M$ of $\pic(Y)$. 
 We say that a closed subvariety $X\subseteq Y$ of codimension $s$ is a \emph{strict complete intersection with respect to $R$} of hypersurfaces $H_1,\dots,H_s$ defined by $f_1,\dots,f_s\in R$ if  $X$ is the complete intersection of $H_1,\dots,H_s$ and the ideal $\psi(\mathscr{I}_X)$ is generated by  $f_1,\dots,f_s$. 
\end{defin}

The following technical theorem provides a positive answer to Question \ref{question_refined} in the case of strict complete intersections.

 \begin{theorem}\label{thm:form_ci}
Let $k$ be a field and $\overline k$ a separable closure of $k$. Let $Y$ be a  geometrically integral $k$-variety with $H^0(Y_{\overline k},\OO_{Y_{\overline k}})=\overline k$.
Assume that $Y$ admits a Cox ring $R$ over $k$ of type $M\subseteq\pic(Y_{\overline k})$ for a  finitely generated $\gal(\overline k/k)$-invariant subgroup $M$ of $\pic(Y_{\overline k})$.
Let $X\subseteq Y$ be a closed subvariety such that $X_{\overline k}\subseteq Y_{\overline k}$ is a strict complete intersection with respect to $R_{\overline k}$ of hypersurfaces $H_1,\dots,H_s$ defined by $f_1,\dots,f_s\in R_{\overline k}$, respectively.
Assume that there are integers $0=s_0< s_1<\dots< s_n=s$, such that for every $i\in\{0,\dots,n-1\}$ the set
$\{[H_{s_i+1}],\dots,[H_{s_{i+1}}]\}$ forms an orbit of cardinality $s_{i+1}-s_i$ under the $\gal(\overline k/k)$-action on $\pic(Y_{\overline k})$.
Then $X_{\overline k}\subseteq Y_{\overline k}$ is a complete intersection of hypersurfaces $H_1',\dots,H_s'$ such that $H'_{s_i+1},\dots,H'_{s_{i+1}}$ form an orbit under the $\gal(\overline k/k)$-action on $Y_{\overline k}$ for all $i\in\{0,\dots,n-1\}$, and $[H'_i]=[H_i]$ in $\pic(Y_{\overline k})$ for all $i\in\{1,\dots,s\}$.
\end{theorem}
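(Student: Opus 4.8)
The plan is to reduce the statement to a Galois-descent problem for the minimal homogeneous generators of the $\gal(\overline k/k)$-invariant ideal $I:=\psi(\mathscr{I}_{X_{\overline k}})$ of $R_{\overline k}$, and to solve the latter degree by degree. Write $G:=\gal(\overline k/k)$ and $m_j:=[H_j]\in M$ for $j\in\{1,\dots,s\}$; by hypothesis these classes are pairwise distinct and are grouped into the $G$-orbits $\{m_{s_i+1},\dots,m_{s_{i+1}}\}$. Since $X$ is defined over $k$, the sheaf $\mathscr{I}_{X_{\overline k}}$ is $G$-stable, and as the correspondence $\psi$ of Definition \ref{def:indeals_cox_ring} is compatible with the Galois action, the homogeneous ideal $I=(f_1,\dots,f_s)$ is $G$-stable as well; concretely $\sigma(I_{m_j})=I_{m_{\sigma\cdot j}}$ for all $\sigma\in G$, where $I_m:=I\cap(R_{\overline k})_m$ and $\sigma\cdot j$ is defined by $\sigma(m_j)=m_{\sigma\cdot j}$. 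I will produce new generators $f_1',\dots,f_s'$ with $f_j'\in I_{m_j}$ that form $G$-equivariant tuples within each orbit block; the divisors $H_j':=D_{f_j'}$ then satisfy $[H_j']=m_j=[H_j]$, are permuted by $G$ inside each block, and, because they again generate $I=\psi(\mathscr{I}_{X_{\overline k}})$, exhibit $X_{\overline k}$ as the desired complete intersection.

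First I would record that generation of $I$ is a hyperplane-avoidance condition adapted to the grading. Because $H^0(Y_{\overline k},\OO_{Y_{\overline k}})^\times=\overline k^\times$, the only homogeneous units of $R_{\overline k}$ are the nonzero scalars, so the effective degrees form a pointed submonoid of $M$ and $\mathfrak{m}:=\bigoplus_{m\neq 0}(R_{\overline k})_m$ is the unique maximal homogeneous ideal; graded Nakayama then applies. As $X_{\overline k}$ has codimension $s$, the ideal $I$ has height $s$, so any generating set has at least $s$ elements; since $f_1,\dots,f_s$ generate $I$ and lie in the $s$ pairwise distinct degrees $m_1,\dots,m_s$, they are minimal generators and $I/\mathfrak{m}I$ is $s$-dimensional with a one-dimensional component in each degree $m_j$. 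Consequently a tuple $(g_1,\dots,g_s)$ with $g_j\in I_{m_j}$ generates $I$ if and only if $g_j\notin(\mathfrak{m}I)_{m_j}$ for every $j$, and $(\mathfrak{m}I)_{m_j}$ is a hyperplane of $I_{m_j}$ with $\sigma\big((\mathfrak{m}I)_{m_j}\big)=(\mathfrak{m}I)_{m_{\sigma\cdot j}}$.

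Next I would build the equivariant generators one orbit at a time. Fix a block with degrees $m_1,\dots,m_t$ (relabelled), let $G_1:=\mathrm{Stab}_G(m_1)$ with fixed field $k_1:=\overline k^{G_1}$, and choose coset representatives $\sigma_1=1,\sigma_2,\dots,\sigma_t$ of $G/G_1$ with $\sigma_j(m_1)=m_j$. The space $I_{m_1}$ carries a continuous semilinear $G_1$-action, so Galois descent for vector spaces gives $I_{m_1}=I_{m_1}^{G_1}\otimes_{k_1}\overline k$, and likewise for the hyperplane $(\mathfrak{m}I)_{m_1}$. A $k_1$-form cannot lie in a proper $\overline k$-subspace, hence there exists $f_1'\in I_{m_1}^{G_1}$ with $f_1'\notin(\mathfrak{m}I)_{m_1}$. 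Setting $f_j':=\sigma_j(f_1')$ yields $f_j'\in I_{m_j}$ with $\sigma(f_j')=f_{\sigma\cdot j}'$, and by $G$-equivariance of the hyperplanes each $f_j'\notin(\mathfrak{m}I)_{m_j}$; since $f_1'\neq 0$ and the $m_j$ are distinct, the divisors $D_{f_1'},\dots,D_{f_t'}$ are distinct and form a single $G$-orbit of cardinality $t$. Running this over all blocks produces $f_1',\dots,f_s'$ which, by the previous paragraph, generate $I$, and the correspondence of Definition \ref{def:indeals_cox_ring} then identifies $X_{\overline k}$ with the complete intersection of the $H_j'=D_{f_j'}$.

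The main obstacle is the bookkeeping of the second paragraph: one must be certain that generation of the graded ideal $I$ is governed purely by the one-dimensional degree-components of $I/\mathfrak{m}I$, so that the generation condition splits into $G$-stable conditions in each individual degree $m_j$. This is what makes the orbitwise descent legitimate, trading the global requirement ``generate $I$'' for the local, Galois-equivariant requirement ``avoid the hyperplane $(\mathfrak{m}I)_{m_j}$'', which descent to $k_1$ can meet. The hypotheses $H^0(Y_{\overline k},\OO_{Y_{\overline k}})=\overline k$ (forcing $\mathfrak{m}$ to be the unique maximal homogeneous ideal) and the distinctness of the classes $m_j$ (forcing the one-dimensional components) are precisely what is needed to carry this out.
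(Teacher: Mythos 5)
Your argument hinges on the assertion that the classes $m_1,\dots,m_s=[H_1],\dots,[H_s]$ are ``by hypothesis pairwise distinct''. That is a misreading of the statement: the hypothesis only forces distinctness \emph{within each block} $\{[H_{s_i+1}],\dots,[H_{s_{i+1}}]\}$ (that set has cardinality $s_{i+1}-s_i$); two different blocks may carry exactly the same classes. This is not a marginal case --- it is the case needed for the application to Theorem \ref{thm:logFano_ample}, where all degrees lie in $\ZZ A$ and may coincide (e.g.\ $X$ an intersection of two quadrics, $D_1=D_2=2A$, giving $n=2$ singleton blocks of equal class). When a class $m$ occurs with multiplicity $\gamma>1$ among the $[H_j]$, your key claims fail: the degree-$m$ component of $I/\mathfrak{m}I$ has dimension $\gamma$, not $1$, and ``each $g_j$ avoids $(\mathfrak{m}I)_{m_j}$'' no longer implies generation. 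Concretely, for two quadrics $I=(f_1,f_2)$ with $\deg f_1=\deg f_2=m$ one has $(\mathfrak{m}I)_m=(\mathfrak{m})_0f_1+(\mathfrak{m})_0f_2=0$, so the choices $g_1=g_2=f_1$ pass your avoidance test in each block yet generate only $(f_1)\subsetneq I$. So as written your proof establishes only the special case in which all $s$ classes are distinct.

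The repair requires exactly the bookkeeping your proof skips and which occupies the second half of the paper's proof: for each Galois orbit of classes $\{L_1,\dots,L_\beta\}$ occurring with multiplicity $\gamma$, one must choose $\gamma$ stabilizer-invariant elements of $I_{L_1}$ that are \emph{jointly} independent modulo the subspace $I_{L_1}\cap(\sum_{L<L_1}I_LR_{k'})$ (the paper's basis $f_{1,1},\dots,f_{1,\gamma},h_{1,1},\dots,h_{1,\delta}$), and then check that their conjugates stay independent in the other degrees $L_i$ because that subspace is permuted by the Galois action; this is the content of the paper's recursion with $\beta$, $\gamma$, $\delta$. In the case $\gamma=1$ your descent step (descend $I_{m_1}$ and its distinguished subspace to the fixed field of the stabilizer, pick an invariant element outside it, take conjugates) is sound and is in substance the paper's first step, where $V^{S_t}\smallsetminus I_t\neq\emptyset$ is used. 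A secondary point: you invoke graded Nakayama for a ring graded by a finitely generated abelian group without Noetherian hypotheses; this does need an argument, but it can be supplied (a Cayley--Hamilton determinant computation using $(R_{\overline k})_0=\overline k$ and pointedness of the effective cone), or avoided entirely as in the paper, where the coefficient $a$ in $\widetilde f_t=af_t+b$ is homogeneous of degree $0$, hence a nonzero scalar.
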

\begin{proof}
Let $k'/k$ be a finite Galois extension such that $f_1,\dots,f_s
\in R_{k'}$ and  $M\subseteq\pic(Y_{k'})$. 
Since $X_{\overline k}$ is a strict complete intersection with respect to $R_{\overline k}$, the ideal $I:=\sum_{i=1}^sf_iR_{k'}$ is invariant under the $\gal(k'/k)$-action on $R_{k'}$.
 For every $i\in\{1,\dots,s\}$, let $S_i\subseteq\gal(k'/k)$ be the stabilizer of $[H_i]$ for the action of $\gal(k'/k)$ on the set $\{[H_1],\dots, [H_s]\}$.

We first prove that 
$X_{\overline k}\subseteq Y_{\overline k}$ is a complete intersection of hypersurfaces $\tilde H_1,\dots,\tilde H_s$ such that $\tilde H_i$ is $S_i$-invariant for all $i\in\{1,\dots,s\}$, and $[\tilde H_i]=[H_i]$ in $\pic(Y_{\overline k})$ for all $i\in\{1,\dots,s\}$.
Let $t\in\{1,\dots,s+1\}$ be the largest integer such that $f_i$ is $S_i$-invariant for all $i\in\{1,\dots,t-1\}$. For all $i\in\{1,\dots,t-1\}$, let $\tilde H_i=H_i$. If $t=s+1$, there is nothing to prove.
If $t\leq s$, let $I_t:=\sum_{\substack{1\leq i\leq s\\i\neq t}}f_iR_{k'}$. Let $V:=\sum_{g\in S_t}g(f_{t})k'\subseteq R_{k'}$.
Then $V$ is $S_t$-invariant, $V\subseteq I$, and all elements of $V$ are homogeneous elements of $R_{k'}$ of degree $[H_t]$.
We denote by $V^{S_t}\subseteq V$ the subset of  $S_t$-invariant elements. Then
$V^{S_t}\nsubseteq I_t$ as $f_t\notin I_t$.
Let $\widetilde f_t\in V^{S_t}\smallsetminus I_t$. Since $\widetilde f_t\in I$, we can write $\widetilde f_t=af_t+b$ with $a\in R_{k'}\smallsetminus\{0\}$ and $b\in I_t$. Since $\deg \widetilde f_t=\deg f_t=[H_t]$ and $I_t$ is a homogeneous ideal, we can assume that $\deg b=[H_t]$ and  $a\in H^0(Y_{k'},\OO_{Y_{k'}})=k'$. Thus $I_t+\widetilde f_tR_{k'}=I$.
Let $\tilde H_t$ be the hypersurface defined by $\widetilde f_t$. Replace $f_t$ by $\widetilde f_t$, $t$ by $t+1$ and repeat the argument. In a finite number of steps we reach the case $t=s+1$.

Given $L_1,L_2\in\pic(Y_{k'})$ we say that $L_1\leq L_2$ if $L_2-L_1$ is an effective divisor class. Then $(\pic(Y_{k'}),\leq)$ is a partially ordered set. We observe that  if $g\in\gal(k'/k)$ and $L_1,L_2\in\pic(Y_{k'})$ satisfy $L_1\leq L_2$, then $gL_1\leq g L_2$. Moreover,
if $g\in\gal(k'/k)$ and $L\in\pic(Y_{k'})$, then $L\leq gL$ is equivalent to $L=gL$ because $g$ has finite order. 
Up to reordering $H_1,\dots,H_s$, we can assume that there are 
$s_0,\dots,s_n\in\{1,\dots,s\}$ as in the statement and $r_1,\dots,r_{m-1}\in\{s_1,\dots,s_{n-1}\}$ with $r_1<\dots<r_{m-1}$, $r_0:=0$, $r_m:=s$ such that 
\begin{enumerate}[ref=(\arabic*)]
\item $[H_i]$ belongs to the $\gal(k'/k)$-orbit of  $[H_j]$ in $\pic(Y_{k'})$ if and only if $ i, j\in\{ r_{l-1}+1,\dots,r_{l}\}$ for some $l\in\{1,\dots,m\}$,
\item $[H_{s_{i}+j}]=[H_{s_{i+1}+j}]$ for all $j\in\{1,\dots,s_{i+1}-s_{i}\}$ for all $i\in\{1,\dots,n\}$ such that $r_{l-1}\leq s_{i},s_{i+2}\leq r_l$ for some $l\in\{1,\dots,m\}$.
\label{assp2}
\end{enumerate}

We conclude the proof by recursion as follows. Let $\alpha\in\{1,\dots,m+1\}$ be the largest number such that $\tilde H_{s_{i-1}+1},\dots,\tilde H_{s_{i}}$ form an orbit under the $\gal(\overline k/k)$-action on $Y_{\overline k}$ for all $i\in\{1,\dots,n\}$ such that $s_i\leq r_{\alpha-1}$. 
For all $i\in\{1,\dots,r_{\alpha-1}\}$, let $H'_i:=\tilde H_i$.
If $\alpha=m+1$, there is nothing to prove. If $\alpha\leq m$, write $I=\bigoplus_{L\in M}I_{L}$ as graded ideal of $R_{k'}$.
Let $\beta:=\#\{[H_{r_{\alpha-1}+1}],\dots,[H_{r_{\alpha}}]\}=s_i-s_{i-1}$ for $i\in\{1,\dots,n\}$ such that $s_i=r_\alpha$. Then $r_{\alpha}-r_{\alpha-1}=\beta\gamma$ for some $\gamma\in\ZZ_{>0}$. 
 For $i\in\{1,\dots,\beta\}$, let $L_i:=[H_{r_{\alpha-1}+i}]$.
For every $i\in\{1,\dots,\beta\}$, the set $\{\tilde f_1,\dots,\tilde f_s\}\cap I_{L_i}$ has cardinality $\gamma$. 
We denote by $f_{i,1},\dots,f_{i,\gamma}$ its elements. 
Let $\delta\in\ZZ_{\geq0}$ such that the $k'$-vector space $I_{L_1}$ has dimension $\gamma+\delta$. Then $\dim_{k'}I_{L_i}=\gamma+\delta$ for all $i\in\{1,\dots,\beta\}$ because they are conjugate to $I_{L_1}$ under the $\gal(k'/k)$-action on $I$. 
For every $i\in\{1,\dots,\beta\}$, choose $h_{i,1},\dots,h_{i,\delta}\in I_{L_i}\cap(\sum_{L<L_i}I_LR_{k'})$ such that $f_{i,1},\dots,f_{i,\gamma},h_{i,1},\dots,h_{i,\delta}$ is a basis of the $k'$-vector space $ I_{L_i}$. Then $h_{i,1},\dots,h_{i,\delta}$ is a basis of the $k'$-vector space $ I_{L_i}\cap(\sum_{L<L_i}I_LR_{k'})$ because $X_{\overline k}$ is a complete intersection. 
For every $i\in\{1,\dots,\beta\}$, let $g_i\in\gal(k'/k)$ such that $g_iL_1=L_i$.
We observe that $g_jg_i^{-1}\sum_{L<L_i}I_LR_{k'}=\sum_{L<L_j}I_LR_{k'}$, because the ideal $I$ is $\gal(k'/k)$-invariant. 
Then $g_i(\sum_{j=1}^\delta h_{1,j}k')=\sum_{j=1}^\delta h_{i,j}k'$ for all $i\in\{1,\dots,\beta\}$. 
Then 
$g_i(f_{1,1}),\dots,g_i(f_{1,\gamma}),h_{i,1},\dots,h_{i,\delta}$ is a basis of $g_iI_{L_1}=I_{L_i}$.
Thus
\[
I=\left(\sum_{i=1}^{r_{\alpha-1}}f_iR_{k'}\right)+\left(\sum_{i=1}^\beta\sum_{j=1}^\gamma g_i(f_{1,j})R_{k'}\right)+\left(\sum_{i=r_\alpha+1}^{s}f_iR_{k'}\right).
\]
By the orbit-stabilizer theorem $\beta=\#\gal(k'/k)/\#S$, where $S$ is the stabilizer of $L_1$. Therefore, the set $\{g_1(f_{1,j}),\dots,g_\beta(f_{1,j})\}$ is an orbit under the $\gal(k'/k)$-action on $R_{k'}$ for all $j\in\{1,\dots,\gamma\}$. 
For every 
$i\in\{1,\dots,\beta\}$ and $j\in\{1,\dots,\gamma\}$, let $H'_{r_{\alpha-1}+(j-1)\beta+i}$
 be the hypersurface defined by $g_i(f_{1,j})$. Then $[H'_{r_{\alpha-1}+(j-1)\beta+i}]=L_i=[H_{r_{\alpha-1}+(j-1)\beta+i}]$ by condition \ref{assp2} above. 
 For every 
$i\in\{1,\dots,\beta\}$ and $j\in\{1,\dots,\gamma\}$, replace $f_{i,j}$ by $g_i(f_{1,j})$.
Replace $\alpha$ by $\alpha+1$ and repeat the argument. In a finite number of steps we reach the case $\alpha=m+1$.
\end{proof}

\section{Ideals of subvarieties in Cox rings}\label{section:saturation}
We study the correspondence between  ideal sheaves of subvarieties and homogeneous ideals in Cox rings introduced in Definition \ref{def:indeals_cox_ring} and we reformulate the strict complete intersection property in terms of saturation of the corresponding ideal in the Cox ring.

\begin{prop}
\label{prop:nullstellensatz2}
Let $k$ be a field.
Let $Y$ be an integral $k$-variety such that $H^0(Y,\OO_{Y_{\overline k}})^\times=\overline k^\times$.
Let $R$ be a Cox ring of $Y$ of type $M\subseteq \pic(Y)$ for a finitely generated subgroup $M$ of $\pic(Y)$. 
Then 
\begin{enumerate}[label=(\roman*), ref=(\roman*)]
\item \label{item:nullstellensatz1}
$\varphi(\sum_{f\in F}fR)=\sum_{f\in F}\OO_Y(-D_f)$ for every set $F$ of homogeneous elements of $R$.
\end{enumerate}
Assume, in addition, that there exist finitely many homogeneous elements $g_1,\dots,g_m$ of $R$ such that $Y\smallsetminus\supp (D_{g_1}),\dots, Y\smallsetminus\supp (D_{g_m})$ form an open covering of $Y$ that refines an affine open covering of $Y$. Then
\begin{enumerate}[label=(\roman*), ref=(\roman*), resume]
\item \label{item:nullstellensatz2}
$\psi(\varphi(I))=(I:(\sum_{i=1}^mg_iR)^\infty)$ for every homogeneous ideal $I$ of $R$.
\item \label{item:nullstellensatz4}
If $M=\pic(Y)$, then $\varphi(\psi(\mathscr{I}))=\mathscr{I}$ for every ideal sheaf $\mathscr{I}\subseteq\OO_Y$.
\end{enumerate}
\end{prop}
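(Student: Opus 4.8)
The plan is to handle the three statements in order, using (i) as the computational backbone for (ii) and (iii), and throughout to pass between an inclusion of ideal sheaves and a membership statement in $R$ by localizing at the $g_i$, whose non-vanishing loci $U_i := Y\setminus\supp(D_{g_i})$ cover $Y$, refine an affine cover, and trivialize $\OO_Y(\deg g_i)$. Two routine facts underlie everything: for homogeneous $f,g$ one has $D_{fg}=D_f+D_g$, and $\OO_Y(-D_f)$ is the ideal sheaf of the subscheme cut out by the section $f$.

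For (i), the inclusion $\supseteq$ is immediate since $F\subseteq\sum_{f\in F}fR$. For $\subseteq$ I would argue locally: a homogeneous element of $\sum_{f\in F}fR$ is a finite sum $h=\sum_j f_j r_j$ with $f_j\in F$ and $r_j\in R$ homogeneous, and on an affine open $V$ trivializing the relevant line bundle the sections become functions $a_j,b_j$ and $\sum_j a_j b_j$, so that $\OO_Y(-D_h)|_V=(\sum_j a_j b_j)\subseteq\sum_j(a_j b_j)=\sum_j\OO_Y(-D_{f_j r_j})|_V$; since $D_{f_j r_j}=D_{f_j}+D_{r_j}\geq D_{f_j}$ we get $\OO_Y(-D_{f_j r_j})\subseteq\OO_Y(-D_{f_j})$, and gluing over a trivializing affine cover yields the claim.

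For (ii), both sides are homogeneous ideals, so it suffices to match their homogeneous elements. Writing $(I:(\sum_i g_iR)^\infty)=\bigcap_i(I:g_i^\infty)$, the statement reduces to: for homogeneous $f$, one has $\OO_Y(-D_f)\subseteq\varphi(I)$ if and only if for every $i$ there is $N_i$ with $fg_i^{N_i}\in I$. The key point is that $D_{g_i}$ vanishes on $U_i$, so $D_{fg_i^{N}}|_{U_i}=D_f|_{U_i}$ and hence $\OO_Y(-D_f)|_{U_i}=\OO_Y(-D_{fg_i^{N}})|_{U_i}$. For ``$\Leftarrow$'': $fg_i^{N}\in I$ forces $\OO_Y(-D_{fg_i^{N}})\subseteq\varphi(I)$ by (i), and restricting to $U_i$ and using the cover gives $\OO_Y(-D_f)\subseteq\varphi(I)$. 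For ``$\Rightarrow$'': using $\varphi(I)=\sum_{h\in I}\OO_Y(-D_h)$ (by (i)) and quasi-compactness, I would obtain on each $U_i$ an inclusion $\OO_Y(-D_f)|_{U_i}\subseteq\sum_j\OO_Y(-D_{h_j})|_{U_i}$ with finitely many $h_j\in I$, and clearing denominators by a power of $g_i$ (legitimate as $g_i$ is a unit on $U_i$ and the $U_i$ refine an affine cover) turns this into a relation $fg_i^{N}=\sum_j h_j r_j\in I$. Collecting over $i$ gives $f\in\bigcap_i(I:g_i^\infty)$, whence equality of ideals.

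For (iii), the inclusion $\varphi(\psi(\mathscr{I}))\subseteq\mathscr{I}$ is immediate from (i), since $\psi(\mathscr{I})$ is generated by the homogeneous $f$ with $\OO_Y(-D_f)\subseteq\mathscr{I}$, so $\varphi(\psi(\mathscr{I}))$ is a sum of sheaves contained in $\mathscr{I}$. For the reverse inclusion I would work on each $U_i$, where the hypothesis $M=\pic(Y)$ lets me identify $\OO_Y(U_i)$ with $(R[g_i^{-1}])_0$, so a local section $a$ of $\mathscr{I}$ over $U_i$ can be written $a=b/g_i^{n}$ with $b\in R$ homogeneous of degree $n\deg g_i$. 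I then set $f:=g_i^{N}b$ and claim that for $N\gg0$ one has $\OO_Y(-D_f)\subseteq\mathscr{I}$, equivalently that $f$ lies in $H^0(Y,\mathscr{I}\otimes\OO_Y(\deg f))$, equivalently that $f$ restricts to $0$ on the closed subscheme $Z$ defined by $\mathscr{I}$: indeed $b$ restricts to $0$ on $Z\cap U_i$ (because $a=b/g_i^{n}\in\mathscr{I}(U_i)$), so, $\OO_Z$ being coherent, the section $b|_Z$ vanishing on the open subscheme $Z\cap U_i$ (the non-vanishing locus of $g_i$ in $Z$) is annihilated by a power $g_i^{N}$, giving $(g_i^{N}b)|_Z=0$. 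Finally $a\in\OO_Y(-D_f)|_{U_i}$ because $\OO_Y(-D_f)|_{U_i}=\OO_Y(-D_b)|_{U_i}=(a)$ (using $D_{g_i}|_{U_i}=0$); as $a$ was an arbitrary local section, $\mathscr{I}|_{U_i}\subseteq\varphi(\psi(\mathscr{I}))|_{U_i}$, and the $U_i$ cover $Y$.

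The main obstacle I anticipate is precisely the passage, in ``$\Rightarrow$'' of (ii) and in the surjectivity half of (iii), from an inclusion of ideal sheaves over the distinguished opens $U_i$ to an honest membership in $R$: this rests on identifying sections over $U_i$ with degree-zero parts of $R[g_i^{-1}]$ (where the hypotheses on $M$ and on the covering enter) and on a $g_i$-saturation/coherence step. Once this dictionary is in place, (i) reduces the remaining work to bookkeeping with effective divisors.
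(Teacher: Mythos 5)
Your part (i) matches the paper's proof (same local computation over a trivializing cover, using $D_{fh}=D_f+D_h$), and your part (iii) is a correct variant of it: where you argue on the closed subscheme $Z$ defined by $\mathscr{I}$ and kill $b|_Z$ by a power of $g_i$, the paper instead directly constructs the effective Cartier divisor $nD_{g_i}+\mathrm{div}(s)$ and checks $\OO_Y(-(nD_{g_i}+\mathrm{div}(s)))\subseteq\mathscr{I}$; these are essentially the same object. The real issue is part (ii), exactly where you place ``the main obstacle'', and there your plan has a genuine gap at the step you call ``clearing denominators''. From $\OO_Y(-D_f)|_{U_i}\subseteq\sum_j\OO_Y(-D_{h_j})|_{U_i}$ you want a relation $fg_i^N=\sum_j h_jr_j$ with $r_j\in R$. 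This requires lifting the section $f|_{U_i}$ of the image sheaf $\sum_j h_j\,\OO_Y(\deg f-\deg h_j)|_{U_i}$ to sections of the summands over $U_i$, i.e.\ it requires $\Gamma(U_i,-)$ to take the sum of subsheaves to the sum of the groups of sections. That is an exactness property which holds on affine opens but fails on quasi-affine ones: on $U=\A^2\smallsetminus\{0\}$ the map $\OO_U^{\oplus 2}\to\OO_U$, $(a,b)\mapsto ax+by$, is surjective as a map of sheaves (equivalently $x\OO_U+y\OO_U=\OO_U$), yet $1\in\Gamma(U,\OO_U)=k[x,y]$ does not lie in $x\,k[x,y]+y\,k[x,y]$. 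Your $U_i$ are only assumed to refine an affine cover, so they are quasi-affine and not affine, and the step is unjustified as stated. Moreover the dictionary $\Gamma(U_i,\OO_Y(L))\cong(R[g_i^{-1}])_L$ that you invoke in both (ii) and (iii) is asserted, not proven; it is not a formal consequence of $M=\pic(Y)$.

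This missing mechanism is precisely what the paper supplies with a tool your proposal never mentions: the torsor $\pi\colon\widehat Y\to Y$ associated with $R$ by \cite[Theorem 1.1]{arXiv:1408.5358}. Pulling back turns every subsheaf $\OO_Y(-D_{f'})$ into the principal ideal $f'\OO_{\widehat Y}$, and $V_i:=\pi^{-1}(U_i)$ is genuinely affine (a principal open subset of $\pi^{-1}(W)$ for an affine open $W\supseteq U_i$), with $\OO_{\widehat Y}(V_i)=R[g_i^{-1}]$ by \cite[Lemma II.5.14]{MR0463157}; taking sections over the affine $V_i$ is then exact and yields $f|_{V_i}\in IR[g_i^{-1}]$, hence $g_i^nf\in I$, at once. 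Your argument can be repaired while staying on $Y$: use the sheaf inclusion over the affine $W\supseteq U_i$, where $\Gamma(W,-)$ is exact on quasi-coherent sheaves, to write $f|_W=\sum_j h_jt_j$ with $t_j\in\Gamma(W,\OO_Y(\deg f-\deg h_j))$; then restrict to $U_i$ and apply \cite[Lemma II.5.14]{MR0463157} to the invertible sheaf $\OO_Y(\deg g_i)$ and its section $g_i$, first to extend each $g_i^{N}t_j|_{U_i}$ to an element $r_j\in R$, and once more to convert the resulting equality of global sections over $U_i$ into an equality in $R$ after multiplying by a further power of $g_i$. Either way, this extension/annihilation mechanism (or the torsor packaging it) is the actual mathematical content of statement (ii), not bookkeeping, and it is absent from your proposal.
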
 
\begin{proof}
To prove \ref{item:nullstellensatz1}, let $F$ be a set of homogeneous elements of $R$. We observe that the inclusion $\sum_{f\in F}\OO_Y(-D_f)\subseteq \varphi(\sum_{f\in F}fR)$ holds by definition. For the reverse inclusion, let $f'\in \sum_{f\in F}fR$ be a homogeneous element.
We have to show that $\OO_Y(-D_{f'})\subseteq \sum_{f\in F}\OO_Y(-D_f)$.  There are $f_1,\dots, f_r\in F$ and $h_1,\dots,h_r\in R$ with $\deg h_i=\deg f'-\deg f_i$ for all $i\in\{1,\dots,r\}$ such that $f'=\sum_{i=1}^rf_ih_i$. Let $R_{[D_{f'}]}$ be the degree-$[D_{f'}]$-part of $R$, and fix an isomorphism $\alpha:R_{[D_{f'}]}\to H^0(Y,\OO_Y(D_{f'}))$ such that $\alpha(f')=1\in K(Y)$. 
Let $\{U_j\}_{j\in J}$ be an affine open covering of $Y$ that trivializes $D_{f'}$, say $D_{f'}=\{(U_j,\alpha_j)\}_{j\in J}$. Then $D_{f_ih_i}=D_{f'}+(\alpha(f_ih_i))=\{(U_j,\alpha_j\alpha(f_ih_i)\}_{j\in J}$ for all $i\in\{1,\dots,r\}$.
Then for every $j\in J$ we have
\[
\OO_Y(-D_{f'})(U_j)=\alpha_j\OO_Y(U_j)\subseteq\sum_{i=1}^r\alpha_j\alpha(f_ih_i)\OO_Y(U_j)=\left(\sum_{i=1}^r\OO_Y(-D_{f_ih_i})\right)(U_j),
\]
as $1=\alpha(f')=\sum_{i=1}^r\alpha(f_ih_i)$. Hence, $\OO_Y(-D_{f'})\subseteq \sum_{i=1}^r\OO_Y(-D_{f_ih_i})$.
Since $D_{f_ih_i}=D_{f_i}+D_{h_i}$, we have $\OO_Y(-D_{f_ih_i})\subseteq \OO_Y(-D_{f_i})$ for all $i\in\{1,\dots, r\}$. 

Now we  prove \ref{item:nullstellensatz2}. 
Let $G:=\sum_{i=1}^mg_iR$. 
To show that $\psi(\varphi(I))\subseteq (I:G^\infty)$,
let $f\in \psi(\varphi(I))$ be a homogeneous element.
Let $\pi:\widehat Y\to Y$ be a torsor  associated to $R$ as in \cite[Theorem 1.1]{arXiv:1408.5358}. 
Then 
\begin{equation}\label{eq:sheaves}
f\OO_{\widehat Y}=\pi^*\OO_Y(-D_f)\subseteq\pi^*\varphi(I)=\sum_{\substack{f'\in I\\\text{homogeneous}}}\pi^*\OO_Y(-D_{f'})=\sum_{\substack{f'\in I\\\text{homogeneous}}}f'\OO_{\widehat Y}.
\end{equation}
Since $\pi$ is affine and $D_{g_i}$ are Cartier divisors, $\pi^{-1}(Y\smallsetminus\supp(D_{g_i}))=\widehat Y\smallsetminus \supp(\pi^*D_{g_i})$ for all $i\in\{1,\dots,m\}$.
Since $\pi^*D_{g_i}$ is the principal ideal defined by $g_i$ for all $i=1,\dots,m$, and $Y\smallsetminus\supp (D_{g_i})$ is contained in an affine open subset of $Y$, then the open subset $V_i:=\pi^{-1}(Y\smallsetminus\supp(D_{g_i}))$ is affine for all $i=1,\dots,m$. We observe that $\OO_{\widehat Y}(V_i)=R[g_i^{-1}]$ for all $i=1,\dots,m$ by \cite[Lemma II.5.14]{MR0463157}.
By  looking at sections of the sheaves in \eqref{eq:sheaves} over the open subsets $V_i$, we get
\[
f|_{V_i}\in \sum_{\substack{f'\in I\\\text{homogeneous}}}f'R[g_i^{-1}]=I R[g_i^{-1}]
\]
for all $i=1,\dots,m$.
So there exists $n\geq0$ such that $g_i^nf\in I$ for every $i=1,\dots,m$.
Let $N:=nm$. Then for every $\alpha_1,\dots,\alpha_m\in\ZZ_{\geq0}$ such that $\alpha_1+\dots+\alpha_m=N$ we have $f\prod_{i=1}^mg_i^{\alpha_i}\in I$ as there exists at least one index $i\in\{1,\dots,m\}$ such that $\alpha_i\geq n$. Thus $fG^N\subseteq I$, which gives $f\in (I:G^\infty)$.

We now prove the reverse inclusion. Let $f\in (I:G^\infty)$. Since $G$ is finitely generated, there exists a positive integer $N$ such that $fG^N\subseteq I$. Then $\OO_{Y}(-D_{fg_i^N})\subseteq\varphi(I)$.
Let $\{U_j\}_j$ be an affine open covering of $Y$ that trivializes simultaneously $D_f$ and $D_{g_i}$ for all $i\in\{1,\dots,m\}$. Write $D_f=\{(U_j,\alpha_j)\}_j$ and $D_{g_i}=\{(U_j,\beta_{i,j})\}_j$ for all $i\in\{1,\dots,m\}$ with $\alpha_j,\beta_{i,j}\in\OO_Y(U_j)$ for all $i,j$. Then 
\begin{multline*}
\OO_Y(-D_f)((Y\smallsetminus \supp D_{g_i})\cap U_j)=\OO_Y(-D_f)(U_j)[\beta_{i,j}^{-1}]=\alpha_j\OO_Y(U_j)[\beta_{i,j}^{-1}]\\
=\alpha_j\beta_{i,j}^N\OO_Y(U_j)[\beta_{i,j}^{-1}]\subseteq\varphi(I)(U_j)[\beta_{i,j}^{-1}]=\varphi(I)((Y\smallsetminus \supp (D_{g_i}))\cap U_j)
\end{multline*}
 for all $i\in\{1,\dots,m\}$ and all $j$. 
 Hence, $\OO_Y(-D_f)\subseteq\varphi(I)$ and $f\in\psi(\varphi(I))$.

For \ref{item:nullstellensatz4},
the inclusion $\varphi(\psi(\mathscr{I}))\subseteq\mathscr{I}$ holds by definition.
For the reverse inclusion, 
it suffices to prove that $\mathscr{I}(Y\smallsetminus \supp(D_{g_i}))\subseteq \varphi(\psi(\mathscr{I}))(Y\smallsetminus \supp(D_{g_i}))$ for all $i\in\{1,\dots,m\}$.
Fix $i\in\{1,\dots,m\}$, and let $D:=D_{g_i}$ and $U:=Y\smallsetminus \supp(D_{g_i})$.
 Let $s\in\mathscr{I}(U)$, and let $D'$ be the principal divisor on $Y$ defined by $s$. Then $D'\cap U$ is an effective divisor on $U$. 
 Let $\{U_j\}_{j\in J}$ be a finite affine open covering of $Y$ that trivializes $D$. For every $j\in J$, let $\alpha_j\in\OO_Y(U_j)$ be a section that defines the principal divisor $D\cap U_j$. 
 Then for every $j\in J$, $\OO_Y(U\cap U_j)=\OO_Y(U_j)[\alpha_j^{-1}]$ and $\mathscr{I}(U\cap U_j)=\mathscr{I}(U_j)[\alpha_j^{-1}]$. Hence, there exists $n_j\in\NN$ such that $\alpha_j^{n_j}s\in \mathscr{I}(U_j)$. Let $n:=\max_{j\in J}n_j$. Then 
$nD+D'$ is an effective Cartier divisor on $Y$ such that
$\mathcal{O}_{Y}(-(nD+D'))\subseteq\mathscr{I}$ and $s\in\mathcal{O}_{Y}(-(nD+D'))(U)$.
\end{proof}

\begin{rem}
If $Y$ is projective and $M$ contains an ample divisor class $A$, 
elements $g_1,\dots,g_m$ as in the statement of Proposition \ref{prop:nullstellensatz2} exist. For example, one can take a basis of the degree-$mA$-part of $R$ for a positive integer $m$ such that $mA$ is very ample.
\end{rem}
\begin{cor}\label{cor:strict_ci_saturation}
Let $Y$ be an integral $k$-variety such that $H^0(Y,\OO_{Y_{\overline k}})^\times=\overline k^\times$. 
Let $R$ be a Cox ring of $Y$ of type $M\subseteq \pic(Y)$ for a finitely generated subgroup $M$ of $\pic(Y)$. 
Assume that there exist finitely many homogeneous elements $g_1,\dots,g_m\in R$ such that $Y\smallsetminus\supp (D_{g_1}),\dots, Y\smallsetminus\supp (D_{g_m})$ form an open covering of $Y$ that refines an affine open covering of $Y$.
Let $X\subseteq Y$ be a complete intersection of hypersurfaces $D_{f_1},\dots,D_{f_s}$ with $f_1,\dots,f_s\in R$. Then $X$ is a strict complete intersection with respect to $R$ if and only if the ideal $\sum_{i=1}^sf_iR$ is saturated with respect to the ideal $\sum_{i=1}^mg_i R$.
\end{cor}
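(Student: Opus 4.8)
The plan is to observe that this corollary is an almost immediate consequence of Proposition \ref{prop:nullstellensatz2}, with all the real content already established there. Write $I:=\sum_{i=1}^s f_i R$ and $G:=\sum_{i=1}^m g_i R$. By the definition of strict complete intersection, and since $X$ is already assumed to be the complete intersection of $D_{f_1},\dots,D_{f_s}$, the property that $X$ is a strict complete intersection with respect to $R$ is equivalent to the single equality $\psi(\mathscr{I}_X)=I$ (the phrase ``$\psi(\mathscr{I}_X)$ is generated by $f_1,\dots,f_s$'' means exactly that it equals $I$). So the goal reduces to identifying $\psi(\mathscr{I}_X)$ with the saturation $(I:G^\infty)$, after which the equivalence is purely formal.

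First I would express $\mathscr{I}_X$ in terms of the $f_i$. Since $X$ is the complete intersection of the hypersurfaces $D_{f_1},\dots,D_{f_s}$, by definition $\mathscr{I}_X=\sum_{i=1}^s\mathscr{I}_{D_{f_i}}$, and the ideal sheaf of the effective Cartier divisor $D_{f_i}$ is $\OO_Y(-D_{f_i})$; hence $\mathscr{I}_X=\sum_{i=1}^s\OO_Y(-D_{f_i})$. Now Proposition \ref{prop:nullstellensatz2}\ref{item:nullstellensatz1}, applied to the finite set $F=\{f_1,\dots,f_s\}$, gives $\varphi(I)=\sum_{i=1}^s\OO_Y(-D_{f_i})=\mathscr{I}_X$.

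Next I would invoke Proposition \ref{prop:nullstellensatz2}\ref{item:nullstellensatz2}, whose hypotheses are precisely the covering assumption imposed in the corollary. Applied to the homogeneous ideal $I$, it yields $\psi(\varphi(I))=(I:G^\infty)$. Combining this with $\varphi(I)=\mathscr{I}_X$ from the previous step gives the desired identification $\psi(\mathscr{I}_X)=(I:G^\infty)$. Consequently $X$ is a strict complete intersection with respect to $R$ if and only if $\psi(\mathscr{I}_X)=I$, if and only if $(I:G^\infty)=I$, i.e. if and only if $I$ is saturated with respect to $G$.

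I do not expect a serious obstacle, since the two parts of Proposition \ref{prop:nullstellensatz2} do all the heavy lifting. The only points that require care are at the interface of the definitions: checking that the inclusion $I\subseteq\psi(\mathscr{I}_X)$ always holds (each $f_i$ lies in $\psi(\mathscr{I}_X)$ because $\OO_Y(-D_{f_i})\subseteq\mathscr{I}_X$), so that ``$\psi(\mathscr{I}_X)$ generated by $f_1,\dots,f_s$'' is genuinely the equality $\psi(\mathscr{I}_X)=I$ rather than mere containment; and confirming that the ideal sheaf of $D_{f_i}$ is indeed $\OO_Y(-D_{f_i})$, which is what validates the first step.
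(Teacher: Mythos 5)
Your proof is correct and takes exactly the same route as the paper: identify $\mathscr{I}_X=\varphi(\sum_{i=1}^s f_iR)$ via Proposition \ref{prop:nullstellensatz2}\ref{item:nullstellensatz1}, then apply \ref{item:nullstellensatz2} to conclude $\psi(\mathscr{I}_X)=(I:G^\infty)$, so that strictness is equivalent to saturation. The paper's proof is just a one-line compression of this; your extra care at the definitional interface (that ``generated by $f_1,\dots,f_s$'' means equality with $I$, and that $\mathscr{I}_{D_{f_i}}=\OO_Y(-D_{f_i})$) is implicit there.
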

\begin{proof}
By Proposition \ref{prop:nullstellensatz2} we know that $\psi(\mathscr{I}_X)=\psi(\varphi(\sum_{i=1}^sf_iR))$ is the saturation of $\sum_{i=1}^sf_iR$ with respect to $\sum_{i=1}^mg_iR$.
\end{proof}

\section{Applications and examples}\label{section:applications}
In this section we discuss the saturation condition from Corollary \ref{cor:strict_ci_saturation}  and we prove Theorem \ref{thm:logFano_ample}.
\begin{lemma}\label{lem:saturation}
If $R$ is a Cohen-Macaulay 
ring, $I$ and $G$ are ideals in $R$ such that $\height(G)>\height(I)$ and $I$ is generated by $s=\height(I)$ elements, then $I$ is saturated with respect to $G$ in $R$.
\end{lemma}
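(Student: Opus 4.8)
The plan is to read off the saturation $(I : G^\infty)$ from a primary decomposition of $I$ and to use the Cohen--Macaulay hypothesis to control the associated primes of $I$. By definition $I$ is saturated with respect to $G$ exactly when $(I : G^\infty) = I$. Fixing a primary decomposition $I = \bigcap_j Q_j$ with associated primes $P_j = \sqrt{Q_j}$, a standard computation with primary ideals gives
\[
(I : G^\infty) = \bigcap_{\{j\,:\,G\nsubseteq P_j\}} Q_j ,
\]
since a $P_j$-primary component $Q_j$ disappears from the intersection precisely when $G\subseteq P_j$ (then $G^n\subseteq Q_j$ for $n\gg0$) and is otherwise unchanged by saturation. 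Hence it suffices to prove that $G$ is contained in \emph{no} associated prime of $I$.

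First I would determine the heights of the primes over $I$. Since $\height(I)=s$ is the minimum of $\height(P)$ over minimal primes $P$ of $I$, every minimal prime has height at least $s$; and since $I$ is generated by $s$ elements, Krull's height theorem bounds the height of every minimal prime over $I$ by $s$. Thus every minimal prime over $I$ has height exactly $s$. The delicate issue is embedded primes: in a general Noetherian ring $R/I$ could acquire embedded associated primes of height larger than $s$, and such a prime might well contain $G$.

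The key step, and the only place the Cohen--Macaulay hypothesis is genuinely used, is to rule out embedded primes via the unmixedness theorem. Localizing at any prime $P\supseteq I$, the ring $R_P$ is Cohen--Macaulay and $\height(IR_P)=s$ (one of the height-$s$ minimal primes of $I$ lies in $P$), so the $s$ generators of $I$ form a regular sequence in $R_P$; therefore $(R/I)_P=R_P/IR_P$ is Cohen--Macaulay. Consequently $R/I$ is Cohen--Macaulay and hence has no embedded primes, so $\mathrm{Ass}(R/I)$ consists exactly of the minimal primes over $I$, all of height $s$. I expect this unmixedness statement to be the main obstacle; everything else is formal, whereas here one must actually invoke the defining property of Cohen--Macaulay rings (equivalently Macaulay's unmixedness theorem).

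Finally I would conclude by a height comparison. If $G$ were contained in some associated prime $P_j$ of $I$, then $\height(G)\le\height(P_j)=s$, contradicting the hypothesis $\height(G)>s=\height(I)$. Hence $G\nsubseteq P_j$ for every $j$, the displayed intersection runs over all $j$, and $(I:G^\infty)=\bigcap_j Q_j=I$. Therefore $I$ is saturated with respect to $G$, as claimed.
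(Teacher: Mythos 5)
Your proposal is correct and takes essentially the same route as the paper's proof: a primary decomposition of $I$, the observation that saturation by $G$ removes exactly those primary components whose radical contains $G$, unmixedness to force every associated prime of $I$ to have height $s=\height(I)$, and the comparison $\height(G)>\height(I)$ to rule out any such containment. The only difference is presentational: the paper invokes Macaulay's unmixedness theorem for Cohen--Macaulay rings as a citation, whereas you re-derive it by localizing, noting that the $s$ generators form a regular sequence in $R_P$, and concluding that $R/I$ is Cohen--Macaulay and hence has no embedded primes --- a standard proof of the same key fact.
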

\begin{proof}
 Let $I=\bigcap_{i=1}^r\mathfrak{q}_i$ be a minimal primary decomposition of $I$ in $R$. Then $(I:G^\infty)=\bigcap_{i=1}^r(\mathfrak{q}_i:G^\infty)$. If $I$ is not saturated with respect to $G$, then there is $i\in\{1,\dots,r\}$ such that $\mathfrak{q}_i\subsetneq (\mathfrak{q}_i:G^\infty)$, then $fG^N\subseteq \mathfrak{q}_i$ for some $f\in (\mathfrak{q}_i:G^\infty)\smallsetminus\mathfrak{q}_i$ and some $N>0$. Since $\mathfrak{q}_i$ is primary, we deduce that $G\subseteq\sqrt{\mathfrak{q}_i}$, so that $\height(G)\leq\height(\sqrt{\mathfrak{q}_i})=\height(\mathfrak{q}_i)$. 
But $\height (\mathfrak{q}_i)=\height(I)$ because the unmixedness theorem holds for $R$ by \cite[Theorem 32, p.110]{MR575344}. This gives a contradiction. 
\end{proof}

\begin{lemma}\label{lem:height_ci}
Let $Y$ be a geometrically integral normal variety over $k$ such that $H^0(Y,\OO_{Y_{\overline k}})^\times= \overline k^\times$. 
Let $R$ be a Cox ring of $Y$ of type $M\subseteq \pic(Y)$ for a finitely generated subgroup $M$ of $\pic(Y)$. 
Assume that $R$ is finitely generated as a $k$-algebra and contains finitely many homogeneous elements $g_1,\dots,g_m$ such that $Y\smallsetminus\supp (D_{g_1}),\dots, Y\smallsetminus\supp (D_{g_m})$ form an affine open covering of $Y$.
Let $X\subseteq Y$ be a complete intersection of hypersurfaces $D_{f_1},\dots,D_{f_s}$ with $f_1,\dots,f_s\in R$. Then $\height(\sum_{i=1}^sf_i R)=s$.
\end{lemma}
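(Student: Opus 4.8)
The plan is to relate the height of the ideal $\sum_{i=1}^s f_i R$ in the Cox ring $R$ to the codimension of $X$ in $Y$, which equals $s$ by the complete intersection hypothesis. First I would invoke the torsor $\pi\colon\widehat Y\to Y$ associated to $R$ (as in \cite[Theorem 1.1]{arXiv:1408.5358}), so that $\spec R = \widehat Y$ admits $\pi$ as an affine morphism whose fibers are torsors under the quasi-torus with character group $M$. The key dimension-theoretic observation is that $\dim R = \dim Y + \rk M$, since the torus fibers have dimension equal to $\rk M$.

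Next I would compute the height of the ideal geometrically. The closed subscheme $V(\sum_{i=1}^s f_i R)\subseteq \spec R=\widehat Y$ is exactly $\pi^{-1}(X)$ restricted to the relevant locus, because each $f_i$ cuts out $\pi^*D_{f_i}$, and by the complete intersection assumption $\mathscr{I}_X=\sum_i\mathscr{I}_{D_{f_i}}$ pulls back to $\sum_i f_i\OO_{\widehat Y}$ (this is precisely the content of the displayed computation \eqref{eq:sheaves} in the proof of Proposition \ref{prop:nullstellensatz2}). Since $\pi$ is flat with fibers of pure dimension $\rk M$, we have $\dim \pi^{-1}(X)=\dim X+\rk M=(\dim Y - s)+\rk M$. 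Therefore the height of the ideal, being $\dim R$ minus the dimension of the closed subscheme it defines (valid because $R$ is a finitely generated algebra over a field and hence catenary and equidimensional on the relevant component), equals
\[
\height\Bigl(\sum_{i=1}^s f_i R\Bigr)=\dim R-\dim\pi^{-1}(X)=(\dim Y+\rk M)-(\dim Y - s+\rk M)=s.
\]

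I would be careful about two points. The lower bound $\height(\sum_i f_i R)\le s$ is immediate from Krull's height theorem, since the ideal is generated by $s$ elements. The reverse inequality $\height\ge s$ is where the complete intersection hypothesis does real work: it guarantees that the $f_i$ form a genuine length-$s$ regular-sequence-like cut rather than collapsing to a smaller-codimension locus, equivalently that $\pi^{-1}(X)$ has the expected codimension $s$ on every component. The cleanest way to secure this is to pass to the affine charts $V_i=\pi^{-1}(Y\smallsetminus\supp(D_{g_i}))=\spec R[g_i^{-1}]$, which cover $\widehat Y$ by hypothesis; on each chart the $D_{g_i}$ trivializes the grading so that $R[g_i^{-1}]$ is (up to inverting a unit) a Laurent-polynomial extension of $\OO_Y(Y\smallsetminus\supp(D_{g_i}))$, and the codimension of $X\cap(Y\smallsetminus\supp(D_{g_i}))$ in that affine open is exactly $s$ by the complete intersection property together with normality of $Y$.

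The main obstacle I anticipate is the bookkeeping needed to transfer the height computation between $\spec R$ and $Y$ uniformly across the charts, and in particular verifying that no component of $V(\sum_i f_i R)$ is contained in the irrelevant locus (the complement of $\bigcup_i V_i$), which would spuriously lower the height. Since $X$ is a subvariety of $Y$ and the $V_i$ cover $\widehat Y$, every component of $\pi^{-1}(X)$ meets some $V_i$, so the global height is the minimum of the chart-wise heights, each equal to $s$; normality of $Y$ and flatness of $\pi$ keep the dimensions additive. Once the geometric dictionary is in place the equality $\height=s$ follows formally, so the burden is entirely in setting up the torsor-pullback correspondence precisely rather than in any delicate estimate.
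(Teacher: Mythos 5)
Your overall strategy coincides with the paper's: the paper also computes $\height(I)=\dim R-\dim R/I$ for $I=\sum_{i=1}^sf_iR$ (using that $R$ is an integral domain together with the dimension formula for finitely generated domains over a field, \cite[Corollary 3, p.92]{MR575344}), and identifies $\dim R$ and $\dim R/I$ with the dimensions of torsors of relative dimension $\rk(M)$ over $Y$ and over $X$. However, your writeup has a genuine gap at exactly the point you yourself flag as the crux: ruling out components of $V(I)\subseteq\spec R$ that are contained in the irrelevant locus. The argument you offer --- ``every component of $\pi^{-1}(X)$ meets some $V_i$'' --- is a non sequitur: components of $\pi^{-1}(X)$ lie in $\widehat Y=\bigcup_i V_i$ by definition, whereas the problematic components are those of $V(I)$ that miss $\widehat Y$ entirely. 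Such a component corresponds to a minimal prime of $I$ containing $G:=\sum_{i=1}^mg_iR$, which is invisible in every chart $R[g_i^{-1}]$; hence your identity ``global height $=$ minimum of the chart-wise heights'' is precisely the unproved claim, not a consequence of the covering. (Relatedly, your opening identification $\spec R=\widehat Y$ is false in general: $\widehat Y$ is a proper open subset of $\spec R$, e.g.\ $\mathbb{A}^{n+1}\smallsetminus\{0\}$ inside $\spec k[x_0,\dots,x_n]$ for $Y=\PP^n_k$. Also, the formula $\height(I)=\dim R-\dim R/I$ needs integrality of $R$, which the paper establishes first and you only gesture at.)

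The missing step is not a formality, because nothing in the stated hypotheses bounds the dimension of components of $V(I)$ inside $V(G)$: Krull gives every minimal prime of $I$ height at most $s$, and your torsor computation correctly handles the primes meeting $\widehat Y$, but a minimal prime containing $G$ can have height less than $s$ unless one knows $\height(G)\geq s$. Concretely, take $Y=\PP^1\times\PP^2$ with its full Cox ring $R=k[x_0,x_1,y_0,y_1,y_2]$ and the $g$'s the monomials $x_iy_j$ (so $\height(G)=2$): the reduced point $X=((1{:}0),(1{:}0{:}0))$ is a scheme-theoretic complete intersection of the hypersurfaces $D_{x_1}$, $D_{x_0y_1}$, $D_{x_0y_2}$, so $s=3$, yet $I=(x_1,\,x_0y_1,\,x_0y_2)$ has $(x_0,x_1)$ as a minimal prime, whence $\height(I)=2$. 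So the chart-local argument cannot close the proof by itself; one needs an extra input such as $\height(G)\geq s$, which does hold in the paper's applications (for the $\ZZ A$-type Cox ring in Theorem \ref{thm:logFano_ample} one has $\height(G)=\dim Y+1>s$, and in the product-of-projective-spaces proposition $\height(G)=1+\min_in_i>s$). The paper's own proof disposes of this very point by citing \cite[Proposition 4.1]{arXiv:1408.5358} for the assertion that $\dim R/I$ is the dimension of a torsor over $X$; your attempt to justify it directly, via the covering by the $V_i$, does not go through.
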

\begin{proof}
The ring $R$ is an integral domain  as in \cite[\S5.1]{MR3307753}. 
Let $I:= \sum_{i=1}^sf_i R$. 
 We compute $\height(I)=\dim R-\dim R/I=s$ by using \cite[Corollary 3, p.92]{MR575344} and the fact that $\dim R$ and $\dim R/I$ are the dimensions of torsors under a torus of rank $\rk(M)$ over $Y$ and $X$, respectively,  by \cite[Proposition 4.1]{arXiv:1408.5358}.
\end{proof}

We recall that a variety $Y$ is log Fano if there exists an effective $\QQ$-divisor $D$ such that $(Y,D)$ is klt and $-(K_Y+D)$ is ample. For the singularities we refer to \cite{MR1492525}.

\begin{proof}[Proof of Theorem \ref{thm:logFano_ample}]
Let $R$ be a Cox ring of $Y$ of type $\ZZ A\subseteq\pic(Y_{\overline k})$. 
The $R$ is a Cohen-Macaulay finitely generated $k$-algebra by \cite[Corollary 5.4]{MR3275656}, \cite[Corollary 5.5]{MR1786505} and \cite[Corollary 3.11]{MR1492525}. 
Let $g_0,\dots,g_n$ be a basis of $H^0(Y,\OO_Y(A))$. Let $f_1,\dots,f_s\in R$ be homogeneous elements such that $X_{\overline k}$ is a complete intersection of the hypersurfaces of $Y_{\overline k}$ defined by $f_1,\dots,f_s$.
Let $I:=\sum_{i=1}^sf_iR$ and $G:=\sum_{i=0}^n g_iR$. 
We observe that $R$ is the normalization of its subring $S=\overline k[g_0,\dots,g_n]$ by \cite[Exercise II.5.14(a)]{MR0463157} and that $\spec R\to\spec S$ is an isomorphism away from the closed subsets defined by $G$ in $\spec R$ and by $G\cap S$ in $\spec S$. 
 Since the morphism is finite and $G\cap S$ is a maximal ideal in $S$, we have $\height (G)=\dim R-\dim (R/G)=\dim S-\dim(S/(G\cap S))=\dim S=\dim R=\dim Y+1>s$.
 Then $I$ is saturated with respect to $G$ in $R$ by  Lemmas \ref{lem:saturation} and  \ref{lem:height_ci}, and we can apply Corollary \ref{cor:strict_ci_saturation} and Theorem \ref{thm:form_ci}.
\end{proof}

In the remainder of the section we focus on complete intersections in products of projective spaces.
\begin{prop}
 Let $k$ a field. Fix $n_1,\dots,n_m\geq1$. Let $f_1,\dots,f_r$ be multihomogeneous elements in $R:= k[x_{i,j}:0\leq j\leq n_i, 1\leq i\leq m]$  that define a complete intersection of codimension $s\leq\min_{1\leq i\leq m}n_i$ in $\PP^{n_1}\times_k\dots\times_k\PP^{n_m}$. Then the ideal $(f_1,\dots,f_s)$  in $R$ is saturated with respect to the irrelevant ideal of $R$.
 \end{prop}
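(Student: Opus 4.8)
The plan is to apply Lemma \ref{lem:saturation} directly, taking $R$ to be the multigraded polynomial ring, $I:=(f_1,\dots,f_s)$, and $G$ equal to the irrelevant ideal $B$ of the Cox ring of $\PP^{n_1}\times_k\dots\times_k\PP^{n_m}$. Recall that $R$ carries the $\ZZ^m$-grading in which $x_{i,j}$ has degree $e_i$, and that $B=\prod_{i=1}^m(x_{i,0},\dots,x_{i,n_i})$ is generated by the monomials $x_{1,j_1}\cdots x_{m,j_m}$; these are exactly the homogeneous elements whose non-vanishing loci give the standard affine open covering of the product. Since $R$ is a polynomial ring over a field it is Cohen-Macaulay, so the hypotheses of Lemma \ref{lem:saturation} are met once I check that $I$ is generated by $s=\height(I)$ elements and that $\height(B)>\height(I)$.

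First I would establish $\height(I)=s$. This is exactly Lemma \ref{lem:height_ci} applied to $Y=\PP^{n_1}\times_k\dots\times_k\PP^{n_m}$, which is geometrically integral and normal with $H^0(Y,\OO_{Y_{\overline k}})^\times=\overline k^\times$, whose Cox ring is the finitely generated $k$-algebra $R$, and for which the complements of the supports of the coordinate monomials $x_{1,j_1}\cdots x_{m,j_m}$ form an affine open covering. Since $f_1,\dots,f_s$ define a complete intersection of codimension $s$, the lemma gives $\height(I)=s$, and $I$ is generated by these $s$ elements.

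The heart of the argument is computing $\height(B)$. Here I would pass to the radical, $\sqrt{B}=\bigcap_{i=1}^m\mathfrak{m}_i$ with $\mathfrak{m}_i:=(x_{i,0},\dots,x_{i,n_i})$, noting that heights are unchanged by taking radicals. Each $\mathfrak{m}_i$ is prime of height $n_i+1$, since its quotient is a polynomial ring in the remaining variables, and the $\mathfrak{m}_i$ are pairwise incomparable because they involve disjoint sets of variables. Any prime containing $\bigcap_i\mathfrak{m}_i$ must contain some $\mathfrak{m}_i$, so the minimal primes over $\bigcap_i\mathfrak{m}_i$ are precisely the $\mathfrak{m}_i$; hence $\height(B)=\min_{1\leq i\leq m}(n_i+1)=1+\min_{1\leq i\leq m}n_i$.

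Finally, the hypothesis $s\leq\min_{1\leq i\leq m}n_i$ yields $\height(B)=1+\min_i n_i\geq 1+s>s=\height(I)$, so Lemma \ref{lem:saturation} applies and $I$ is saturated with respect to $B$. I expect the one genuinely content-bearing step to be the height computation for the irrelevant ideal, namely identifying its minimal primes and ruling out that a ``mixed'' prime of smaller height could appear, whereas the reduction to Lemma \ref{lem:saturation} and the equality $\height(I)=s$ are essentially bookkeeping via the results already established.
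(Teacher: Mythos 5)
Your proposal is correct and follows exactly the paper's route: the paper likewise concludes from Lemma \ref{lem:saturation} and Lemma \ref{lem:height_ci} after noting that the irrelevant ideal has height $1+\min_{1\leq i\leq m}n_i>s$. Your write-up merely supplies details the paper leaves implicit, namely the verification of the hypotheses of the two lemmas and the minimal-prime computation giving the height of the irrelevant ideal.
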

 \begin{proof}
The irrelevant ideal of $R$ is $G=\prod_{i=1}^m(x_{i,0},\dots,x_{i,n_m})$. Then $\height G=1+\min_{1\leq i\leq m}n_i> s$. We conclude by Lemmas \ref{lem:saturation} and  \ref{lem:height_ci}. 
 \end{proof}
 
\begin{rem}\label{rem:prime}
A prime ideal $I$ in a ring $R$ is saturated with respect to every ideal $G\not\subseteq I$. Primality is not an easy condition to check in general. However,  if a Cox ring $R$ is isomorphic to a polynomial ring over a field (e.g. if $Y$ is a toric variety), then
every ideal generated by linear polynomials is a prime ideal. 
\end{rem}

The following example shows that the property being a strict complete intersection depend on the choice of the Cox ring.

\begin{example}\label{example:1}
Consider $\PP^1\times \PP^1$ with coordinates $((x_0: x_1),(y_0:y_1))$. The Cox ring  of identity type (i.e.~with $M=\pic(\PP^1\times\PP^1)$) is $R=k[x_0,x_1,y_0,y_1]$ with irrelevant ideal $G=(x_0y_0,x_0y_1,x_1y_0,x_1y_1)$. 
The ideal generated by $x_0y_0$ and $x_1y_1$ in $R$ is not saturated with respect to $G$, as $x_0x_1\in ((x_0y_0,x_1y_1):G^\infty)\smallsetminus (x_0y_0,x_1y_1)$. But the ideal generated by $x_0y_0$ and $x_1y_1$ in the Cox ring of type $\ZZ (1,1)\subseteq\ZZ^2\cong\pic(\PP^1\times\PP^1)$ is saturated with respect to the irrelevant ideal as in the proof of Theorem \ref{thm:logFano_ample}. 

If we allow nonreduced structure, it is easy to construct complete intersections of hypersurfaces of degrees that do not belong to the subgroup $\ZZ (1,1)\subseteq\pic(\PP^1\times\PP^1)$ that are not strict complete intersections with respect to $R$:
the ideal generated by $x_0y_0^2, x_1^2y_1$ in the Cox ring of identity type $R$ defines a complete intersection in $\PP^1\times\PP^1$ but it is not saturated with respect to the irrelevant ideal $G$ in $R$, as $x_0^2x_1^2\in((x_0y_0^2, x_1^2y_1):G^\infty)\smallsetminus (x_0y_0^2, x_1^2y_1)$.
\end{example}

The following example shows that the property being a strict complete intersection with respect to a given Cox ring can depend on the choice of the hypersurfaces defining the complete intersection.

\begin{example}
With the notation of Example \ref{example:1}, 
the point $((0:1),(0:1))$ with reduced scheme structure 
can be written as complete intersection in two different ways: as intersection of the hypersurfaces $\{x_0=0\}$ and $\{y_0=0\}$, or as intersection of the hypersurfaces $\{x_0=0\}$ and $\{x_1y_0=0\}$. 
The ideal generated by $x_0$ and $y_0$ in $R$ is a prime ideal and hence it is saturated with respect to the irrelevant ideal $G$. The ideal generated by $x_0$ and $x_1y_0$ in $R$ is not saturated with respect to $G$, as its saturation is the ideal generated by $x_0$ and $y_0$.
\end{example}

\subsection*{Acknowledgements}
This work was partially supported by grant ES 60/10-1 of the Deutsche Forschungsgemeinschaft. The author thanks the anonymous referee for the helpful remarks.

\bibliographystyle{alpha}
\bibliography{forms_ci_preprints,bibliography}

\end{document}